\providecommand{\keywords}[1]{\textbf{\textit{Keywords: }} #1}
\newcommand{\qq}{{\mathbb{Q}}}   
\begin{document}
\pagenumbering{arabic}
%
%
\newtheorem{thm}{Theorem}
\newtheorem{lem}[thm]{Lemma}
%
\theoremstyle{definition}
\newtheorem{defi}{Definition}
%
\title{On the reducibility behaviour of Thue polynomials}
\author{J.\ K\"onig\thanks{Universit\"at W\"urzburg, Emil-Fischer-Str.\ 30, 97074 W\"urzburg, Germany. email: joachim.koenig@mathematik.uni-wuerzburg.de}}
\maketitle
\begin{abstract}
We prove a result about reducibility behaviour of Thue polynomials over the rationals that was conjectured in \cite{M}.
Special cases have been proved e.g. by M\"uller in \cite{M}, Theorem 4.9, and Langmann (\cite{L}, Satz 3.5).\\
The proof uses ramification theory to reduce the assertion to a statement about permutation groups containing an $n$-cycle. This statement is finally proven with the help of the classification of primitive permutation groups containing an $n$-cycle (a result which rests on the classification of finite simple groups).
\end{abstract}
\keywords{Polynomials; Hilbert's irreducibility theorem; Siegel functions; Permutation groups}

\section{Introduction and statement of the main theorem}
The famous Hilbert irreducibility theorem states that if $K$ is a number field, $f(t,X) \in K(t)[X]$ an irreducible polynomial, then there are infinitely many specializations $t \mapsto t_0 \in K$ such that $f(t_0,X)$ remains irreducible (and one can even demand that the $t_0$ be integers of $K$). A related question is whether there are also infinitely many integer specializations such that $f(t_0,X)$ becomes reducible. This question is linked to Siegel's theorem about integral points of algebraic curves. \\
In many cases one can obtain finiteness results for the set of reducible specializations. This was done e.g. in \cite{M}. We refer to this paper for some background on the role of Siegel's theorem,
as well as for the basics of ramification theory that will be used here.

The goal of this article is to prove the following theorem, which was conjectured in \cite[Conjecture 4.10]{M}:
\begin{thm}
\label{T1}
Let $H(t,X) \in \mathbb{Q}[t,X]$ be a homogeneous polynomial of degree $n$, not divisible by $t$, and not a proper power over $\overline{\mathbb{Q}}$. Then one of the following holds:
\begin{itemize}
\item $H(t_0,X) - 1$ becomes reducible for only finitely many $t_0 \in \mathbb{Z}$
\item $n \in \{2,4\}$.
\end{itemize}
\end{thm}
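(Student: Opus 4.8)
The plan is to study the curve $H(t,X) = 1$ and apply Siegel's theorem via the framework of Müller's paper. Concretely, consider the polynomial $f(t,X) := H(X,1) - t \in \mathbb{Q}(t)[X]$ — wait, let me reconsider. Since $H$ is homogeneous of degree $n$ and not divisible by $t$, we can write $H(t,X) = t^n h(X/t)$ where $h(Y) = H(1,Y)$ has degree $n$ (the coefficient of $Y^n$ is nonzero precisely because $t \nmid H$). So $H(t_0,X) - 1 = 0$ is birationally the curve $h(X/t_0) = 1/t_0^n$, i.e. $g(Y) := h(Y)$ takes the value $t_0^{-n}$ at $Y = X/t_0$. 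Thus reducibility of $H(t_0,X) - 1$ is governed by the splitting behaviour of the polynomial $g(Y) - s$ over $\mathbb{Q}$ where $s = t_0^{-n}$ ranges over $n$-th powers of rationals; equivalently one looks at the Galois-theoretic fibre of the map $Y \mapsto g(Y)$ combined with the $n$-th power map $t \mapsto t^n$.

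**Reduction to a group-theoretic statement.**

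First I would pass to the Galois closure and set $G = \mathrm{Gal}(\overline{\mathbb{Q}(t)}/\mathbb{Q}(t))$ acting on the roots of the relevant polynomial, viewed as a permutation group of degree $n$. The key point is that composing $g$ with the $n$-th power map $\mu_n: t \mapsto t^n$ introduces, in the monodromy over $t = \infty$ (or $t=0$), a branch cycle that — because $H$ is homogeneous of degree $n$ and not a proper power — is an $n$-cycle. This is exactly the ramification-theoretic input the abstract promises: the place at infinity of the original Thue equation contributes an $n$-cycle to the monodromy group $G$ of $f(t,X) := H(t,X) - 1$ over $\mathbb{Q}(t)$. By Siegel's theorem (in the form used in \cite{M}), if $f(t_0,X) - 1$ is reducible for infinitely many $t_0 \in \mathbb{Z}$, then $G$ acting on the $n$ roots must be \emph{imprimitive} (or intransitive, but transitivity follows from $H$ not being a proper power). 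So the whole theorem reduces to: \emph{an imprimitive permutation group of degree $n$ containing an $n$-cycle forces $n \in \{2,4\}$} — except this is false as stated (e.g. $D_4$ of degree $4$, but also wreath-type groups in larger degree contain $n$-cycles), so the actual group statement must be subtler, using more than just the $n$-cycle: it must use that the point stabilizer also knows about the inertia at the other branch points, or that the block structure is incompatible with the specific cycle type forced at $\infty$.

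**The group-theoretic core.**

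The heart of the argument is the classification of permutation groups containing an $n$-cycle. I would invoke the known classification (Jones, building on Feit and the CFSG) of primitive groups of degree $n$ containing an $n$-cycle: these are $C_p$, $S_n$, $A_n$ (for $n$ odd), subgroups of $\mathrm{AGL}_1(p)$, $\mathrm{PGL}_d(q) \le G \le \mathrm{P\Gamma L}_d(q)$ acting on projective space, and $\mathrm{PSL}_2(11)$, $M_{11}$, $M_{23}$. For the imprimitive case one has a block system of $b$ blocks of size $a$ with $ab = n$; the group embeds in $S_a \wr S_b$, the induced action on blocks is a transitive group of degree $b$ containing a $b$-cycle, and there is an $n$-cycle in the group projecting to that $b$-cycle. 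The strategy is to run an induction on $n$: analyze which block sizes are compatible with the extra arithmetic constraints coming from the genus-zero (or genus-one) condition of Siegel's theorem — the Riemann–Hurwitz formula applied to the cover, together with the specific ramification (one totally ramified point over $\infty$ giving the $n$-cycle, and controlled ramification elsewhere forced by the fact that $g - s$ has the shape of a "Siegel function"), severely restricts the ramification data, and hence the possible $G$, leaving only the sporadic low-degree exceptions $n \in \{2,4\}$.

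**Main obstacle.**

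The hard part will be the group theory combined with the ramification bookkeeping: showing that the only imprimitive (or otherwise non-generic) monodromy groups surviving all the constraints from Siegel's theorem — i.e. yielding a cover of $\mathbb{P}^1$ of genus $\le 1$ with the prescribed $n$-cycle at infinity and with a reducible generic fibre after the specialization-twist — occur in degrees $2$ and $4$. I expect one must carefully enumerate, for each family in Jones's classification and for each admissible block structure, the corresponding contribution to the genus via Riemann–Hurwitz, ruling out all but finitely many degrees by hand and then checking the small cases explicitly. Handling the projective examples $\mathrm{PGL}_d(q)$ and the sporadic groups $\mathrm{PSL}_2(11), M_{11}, M_{23}$ — verifying that none of them can arise here as the monodromy group of a suitable Thue cover with a reducible specialization locus — is likely the most delicate and case-heavy portion of the proof.
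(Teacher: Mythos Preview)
Your plan has the right ingredients (the $n$-cycle from homogeneity, Siegel/M\"uller's reduction, the Jones--Feit classification, Riemann--Hurwitz), but it misses the structural pivot that actually makes the argument go through. The paper does \emph{not} reduce to ``an imprimitive group with an $n$-cycle'' --- as you correctly note, that statement is false. Instead one works with $G:=\mathrm{Gal}(f\mid\overline{\mathbb{Q}}(t^n))$, where $f(X)=t^nH_2(X)-1$; then the cyclic subextension $\overline{\mathbb{Q}}(t)\mid\overline{\mathbb{Q}}(t^n)$ (or $\overline{\mathbb{Q}}(t^2)\mid\overline{\mathbb{Q}}(t^n)$) supplies a \emph{transitive normal} subgroup $N\trianglelefteq G$ with $G=N\langle\tau\rangle$ and $G/N$ cyclic of order $n$ (resp.\ $n/2$). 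The decisive group-theoretic lemma (the paper's Theorem~\ref{group}) is: if $G\le S_n$ is generated by an $n$-cycle $\tau$ and a transitive normal subgroup $N$, then $|N\cap\langle\tau\rangle|\ge 2$, and if equality holds then $G$ contains no element of order $>n$. The first part immediately kills the case $[G:N]=n$; the second forces $k=1$ in $2m=kn$. Your sketch never isolates this normal-subgroup structure, which is why you are left casting about for ``extra constraints on block systems'' without a concrete target.

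Two further concrete gaps. First, the Siegel input is not a genus bound on the cover defined by $f$ itself; it is the fact that a $\mathbb{Z}$-Siegel function $g(Z)$ has at most two (conjugate) poles, which forces the inertia generator $\sigma$ over $t^n\mapsto\infty$ to have cycle type $(m)$ or $(m,m)$ in its action on $G/G_y$ (here $\overline{\mathbb{Q}}(y)$ is a minimal subfield over which $f$ splits). A short orbit-length argument using $\gcd(n_1,\dots,n_r)=1$ then pins this down to $(m,m)$ and gives $2m=kn$; Riemann--Hurwitz is applied only to the genus-zero extension $\overline{\mathbb{Q}}(z)\mid\overline{\mathbb{Q}}(y)$. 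Second, the endgame is not a case-by-case run through $\mathrm{PGL}_d(q)$, $M_{11}$, $M_{23}$ as monodromy groups. Once $k=1$, the branch cycle description for $\overline{\mathbb{Q}}(y)\mid\overline{\mathbb{Q}}(t^n)$ is $(n)$, $(\tfrac{n}{2},\tfrac{n}{2})$ and a transposition, forcing $G/K_y\cong C_2\wr C_{n/2}$; then the transitive abelian block kernel $C_2^{\,n/2}$ must act regularly in a faithful action of degree $\le n$, whence $2^{n/2}\le n$ and $n\in\{2,4\}$. The Jones--Feit list is used only inside the proof of the auxiliary group lemma (to bound cyclic quotients of primitive groups with an $n$-cycle), not to enumerate candidate monodromy groups directly.
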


{\bf Remark:}\\
Note that the second case cannot be excluded, cf. \cite{M}:\\
There the examples $H(t,X) = X^2-dt^2$ and $H(t,X) = -4dX^2(dX^2-t^2)$, $d>1$ a square-free integer, are given. The Galois groups of the polynomials $H(t,X)-1$ over $\overline{\mathbb{Q}}(t)$ are $C_2$ and $D_4$ respectively; and we will recognize this observation again in the course of the proof.


\section{Some results about permutation groups}
\label{permgroups}
To prove Theorem \ref{T1}, we will need to know about the primitive groups containing a cyclic transitive subgroup. These groups have been classified (using the classification of finite simple groups) by Feit (\cite{F}, 4.1) and Jones (\cite{J}), with the following result:
\begin{thm}
\label{feit}
Let $G\leq S_n$ be a primitive group containing a cyclic transitive subgroup. Then one of the following holds:
\begin{itemize}
\item $n=p \in \mathbb{P}$, and $C_p \leq G \leq AGL_1(p)$ (where $AGL_1(p)$ is the symmetric normalizer of $C_p$, of order $p \times (p-1)$).
\item $G = A_n$ (for $n$ odd) or $S_n$.
\item $n = \frac{q^d-1}{q-1}$ with $d \geq 2$ and $q$ a prime power, $PSL_d(q) \leq G \leq P\Gamma L_d(q)$.
\item $n = 11$, $G = PSL_2(11)$ in its action on 11 points.
\item $G = M_{11}$ or $M_{23}$ in the natural action.
\end{itemize}
\end{thm}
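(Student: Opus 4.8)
The statement is precisely the Feit--Jones classification, so the argument I would give is really an account of the strategy behind those two references. The plan is to reduce to the classification of $2$-transitive groups and then sieve that list. First, a faithful transitive cyclic permutation group is automatically regular, so the hypothesis says exactly that $G$ contains a regular cyclic subgroup $C_{n}$, i.e.\ an $n$-cycle. I would split on whether $n$ is prime. If $n=p$ is prime, Burnside's theorem on transitive groups of prime degree gives that $G$ is either $2$-transitive or permutation-isomorphic to a subgroup of $AGL_{1}(p)$ containing $C_{p}$ --- the latter case being the first conclusion. If $n$ is composite, I would invoke Schur's theorem that $C_{n}$ is then a \emph{B-group}: every primitive overgroup of a regular $C_{n}$ with $n$ composite is $2$-transitive. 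Either way we may assume henceforth that $G$ is $2$-transitive of degree $n$ and contains an $n$-cycle.

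By Burnside's theorem on $2$-transitive groups, $G$ has socle either elementary abelian (affine type) or nonabelian simple (almost simple type), and the CFSG-based classification supplies the explicit list of socles and their $2$-transitive actions in the second case. For the affine type, of degree $p^{d}$ with $d\geq2$, write $G=V\rtimes G_{0}$ with $V$ the translations (exponent $p$) and $G_{0}\leq GL_{d}(p)$; an $n$-cycle would be an element $(v,A)$ of order $p^{d}$ with $A$ unipotent, and a short computation with Jordan blocks shows that $\bigl(\sum_{i<p^{a}}A^{i}\bigr)v$ is too small to lift the order up to $p^{d}$, except when $(p,d)=(2,2)$, where $G=S_{4}$ --- already an instance of the $S_{n}$ conclusion. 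So it only remains to run through the almost simple $2$-transitive groups family by family.

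Socle $A_{m}$ in the natural action yields $S_{n}$ for every $n$ and $A_{n}$ for $n$ odd (conclusion 2). Socle $PSL_{d}(q)$ acting on the points or hyperplanes of $PG(d-1,q)$, $n=\frac{q^{d}-1}{q-1}$, carries a Singer cycle --- the image in $PGL_{d}(q)$ of multiplication by a generator of $\mathbb{F}_{q^{d}}^{\times}$ --- acting regularly, which is conclusion 3; here one also records, from the shape $C_{n}\rtimes C_{d\cdot\log_{p}(q)}$ of the Singer normalizer in $P\Gamma L_{d}(q)$, which intermediate $G$ actually contain an $n$-cycle, so the displayed inclusion is to be read as a necessary condition (e.g.\ for $(d,q)=(2,11)$ one needs $PGL_{2}(11)\leq G$, since $PSL_{2}(11)$ has no element of order $12$). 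For $PSp_{2d}(2)$, $PSU_{3}(q)$, $Sz(q)$ and ${}^{2}G_{2}(q)$ in their natural $2$-transitive actions one checks that the degree is the order of no element, most cleanly by factoring it and comparing with the orders of the maximal tori (e.g.\ $q^{2}+1=(q-\sqrt{2q}+1)(q+\sqrt{2q}+1)$ for $Sz(q)$, which splits across two non-commuting tori). For the finitely many remaining $2$-transitive actions --- $A_{7}$ on $15$ points, the Mathieu groups on $11,12,22,23,24$ points, $M_{11}$ on $12$ points, $HS$ on $176$ points and $Co_{3}$ on $276$ points --- one reads off the element orders, and the only survivors are $PSL_{2}(11)$ and $M_{11}$ on $11$ points and $M_{23}$ on $23$ points, giving conclusions 4 and 5.

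The main obstacle is that the two decisive ingredients are deep rather than elementary: the reduction to $2$-transitivity rests on Burnside's prime-degree theorem and Schur's B-group theorem, and the sieve rests on the CFSG-dependent classification of $2$-transitive groups. Within the elementary portion, the step needing genuine care is the $PSL_{d}(q)$ family --- controlling the normalizer of the Singer cycle throughout the tower $PSL_{d}(q)\leq G\leq P\Gamma L_{d}(q)$, and keeping the point-action and the hyperplane-action apart --- rather than any single group-theoretic computation.
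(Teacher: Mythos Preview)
The paper does not prove Theorem~\ref{feit} at all: it is stated as a known classification result, with the proof delegated entirely to the references of Feit and Jones (and, implicitly, to the classification of finite simple groups). There is therefore no ``paper's own proof'' to compare your proposal against.

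That said, your outline is a fair summary of the strategy those references follow: reduce to $2$-transitivity via Burnside (prime degree) and Schur's $B$-group theorem (composite degree), then sieve the CFSG list of $2$-transitive groups for those containing an $n$-cycle. Your caveats are also well placed --- in particular your remark that the inclusion $PSL_d(q)\le G\le P\Gamma L_d(q)$ is only a necessary condition (not every intermediate group contains a Singer cycle), and your identification of the affine exception $(p,d)=(2,2)$. The one place where your sketch is genuinely thin is the affine elimination: the sentence about $\bigl(\sum_{i<p^a}A^i\bigr)v$ being ``too small'' is more of a gesture than an argument, and a referee would want either the explicit order bound for elements of $AGL_d(p)$ or a clean reference. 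For the purposes of this paper, however, the correct move is exactly what the author does: cite the result and move on.
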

For our situation, we need the following lemma, which follows easily from the above classification result:
\begin{lem}
\label{cyclic}
Let $G\leq S_n$ be a primitive group containing a cyclic transitive subgroup, and $N \trianglelefteq G$.
\begin{itemize}
\item If $G/N$  is cyclic of order $k\geq n$, then $k=n$ is prime and $G = C_n$.
\item If $G/N \cong C_{n/2}$, then $G = C_2$ or $G = S_4$.
\end{itemize}
\end{lem}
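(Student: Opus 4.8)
\textit{Proof idea.} The plan is to run through the five cases of Theorem~\ref{feit} and, in each, read off the possible cyclic quotients $G/N$ from the normal subgroup lattice of $G$. It is convenient to split the cases of Theorem~\ref{feit} into \emph{(a)} the almost simple ones --- $G=A_n$ or $S_n$ with $n\ge5$, the groups satisfying $PSL_d(q)\le G\le P\Gamma L_d(q)$ with $(d,q)\notin\{(2,2),(2,3)\}$, and $G=PSL_2(11),M_{11},M_{23}$ --- and \emph{(b)} the remaining ones, which (since $S_2=AGL_1(2)$ and $S_3=AGL_1(3)$) are exactly the affine groups $C_p\le G\le AGL_1(p)$ together with the two degree-$4$ groups $A_4,S_4$. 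Note that $n\ge5$ for every group in (a).

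Consider case (a). The socle $M$ of $G$ --- one of $A_n$, $PSL_d(q)$, $PSL_2(11)$, $M_{11}$, $M_{23}$ --- is then the unique minimal normal subgroup of $G$ and is non-abelian simple, so for every $N\trianglelefteq G$ we have $N\cap M\trianglelefteq M$, hence $N\cap M\in\{1,M\}$. If $N\cap M=1$ then $[N,M]\le N\cap M=1$, so $N\le C_G(M)=1$ (the centraliser of the socle of an almost simple group being trivial), and then $G/N=G$ is non-abelian, hence not cyclic. If $M\le N$ then $G/N$ is a quotient of $G/M$, so $|G/N|$ divides $|G/M|$, which is $\le2$ in the alternating case, $\le f\gcd(d,q-1)$ (writing $q=p^f$) in the $PSL_d(q)$ case, and $=1$ in the three remaining cases. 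An elementary estimate gives
\[
f\gcd(d,q-1)\;\le\;fd\;<\;\frac{q^d-1}{q-1}\;=\;n\qquad(d\ge2,\ q\ \text{a prime power}),
\]
and a slightly sharper form of the same estimate yields $f\gcd(d,q-1)<n/2$ whenever $n$ is even and $(d,q)\ne(2,3)$ (which is the situation throughout (a)); moreover $2<n$ for $n\ge5$, and in the alternating case $n$ even forces $G=S_n$ with $n\ge6$, so $|G/M|=2<3\le n/2$. Hence in case (a) the group $G$ has no cyclic quotient of order $\ge n$ and, when $n$ is even, no cyclic quotient of order $n/2$.

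Now case (b). If $A_4\le G\le S_4$, then $G$ contains a $4$-cycle, forcing $G=S_4$; the cyclic quotients of $S_4$ (by $S_4$ and by $A_4$) have orders $1$ and $2$ --- none $\ge4$ --- while $S_4/A_4\cong C_2=C_{n/2}$, which is the second conclusion. If $C_p\le G\le AGL_1(p)$, then $C_p$ is the unique Sylow $p$-subgroup and is normal in $G$, so again $N\cap C_p\in\{1,C_p\}$. Suppose $G/N$ is cyclic of order $\ge n=p$: if $C_p\le N$ then $|G/N|$ divides $|G/C_p|$, a divisor of $p-1$, so $|G/N|\le p-1<p$, impossible; thus $N\cap C_p=1$, hence $N\le C_{AGL_1(p)}(C_p)=C_p$ and $N=1$, so $G=G/N$ is a cyclic transitive subgroup of $S_p$ and therefore $G=C_p$, giving $|G/N|=p=n$ prime and $G=C_n$ --- the first conclusion. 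Finally, if $G/N\cong C_{n/2}$ then $n=p$ must be even, so $p=2$ and $G\le AGL_1(2)=C_2$ is transitive, whence $G=C_2$ --- the second conclusion. This exhausts all cases of Theorem~\ref{feit} and proves the lemma.

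The only point beyond routine verification is the inequality $f\gcd(d,q-1)<\tfrac{q^d-1}{q-1}$ together with its halved form; the one thing to watch is that the halved inequality is \emph{not} strict at $(d,q)=(2,3)$ --- where $PSL_2(3)\cong A_4$ fails to be simple, so the almost-simple argument of case (a) does not apply there --- and that this borderline instance (of degree $n=4$) is precisely the origin of the exceptional group $S_4$, which is therefore recovered in case (b) by the direct inspection in degree $4$.
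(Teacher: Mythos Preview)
Your proof is correct and follows essentially the same strategy as the paper's: run through the classification in Theorem~\ref{feit} and bound the possible cyclic quotients of $G$ by the order of $G/\operatorname{soc}(G)$. The paper's proof is far terser --- it declares all cases except $PSL_d(q)\le G\le P\Gamma L_d(q)$ to be obvious and only spells out the inequality $\tfrac{n}{2}>\tfrac{1}{2}p^{r(d-1)}$ versus $r\cdot(d,p^r-1)$ --- whereas you make the almost-simple/non-almost-simple dichotomy explicit and treat the affine and degree-$4$ cases by hand; but the substance is the same, and the key arithmetic step (isolating $(d,q)=(2,3)$, i.e.\ $S_4$, as the unique borderline instance) is identical.

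One cosmetic point: in your case (b) you list $A_4$ among the degree-$4$ possibilities, but $A_4$ contains no $4$-cycle and so never arises under the hypotheses of the lemma; you immediately discard it for exactly this reason, so the argument is unaffected.
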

\begin{proof}
The only cases worth some consideration come from the (not always cyclic) factor $P\Gamma L_d(p^r)/PSL_d(p^r)$. This factor has order $r\cdot(d,p^r-1)$, and $\frac{n}{2} = \frac{p^{rd}-1}{2(p^r-1)} > \frac{1}{2} p^{r(d-1)}$. But for this to be smaller than $r\cdot (d,p^r-1)$, we need $d=2$, $r=1$ and $p\leq3$, which leaves only the groups $PGL_2(3) = S_4$ and $PGL_2(2) = S_3$.
\end{proof}
We now deduce a result that will help prove Theorem \ref{T1}, but may also be interesting itself. We therefore state it in more generality than what will later be needed in the proof of Theorem \ref{T1}.
\begin{thm}
\label{group}
Let $G \le S_n$ be a finite permutation group generated by a cyclic transitive subgroup $\langle\tau\rangle$ and a normal transitive subgroup $N$. Then the following hold:
\begin{itemize}
\item[i)] $|N\cap \langle\tau \rangle|\ge 2$.
\item[ii)] If $|N\cap \langle\tau \rangle|=2$, then $G$ is of the form $((..(C_{p_1}^{k_1}.C_{p_2}^{k_2})...).C_{p_m}^{k_m}).\tilde{G}$,\footnote{We use ``Atlas notation" here to denote by $N.H$ a group with normal subgroup $N$ and corresponding quotient group $H$.} where $\tilde{G} \in \{C_2, S_4\}$, $\prod{p_i} = \frac{n}{2}$ or $\frac{n}{4}$ respectively, $k_i \in \mathbb{N}$ (and the $p_i$ are primes). In particular $G$ does not contain any element of order larger than $n$.
\end{itemize}
\end{thm}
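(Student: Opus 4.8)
The natural strategy is induction on $n$ via a block system. Since $N$ is transitive and normal in $G$, and $G$ contains the cyclic transitive group $\langle\tau\rangle$, we may choose a maximal block system $\mathcal{B}$ for $G$. The induced action of $G$ on $\mathcal{B}$ is primitive, and the image of $\langle\tau\rangle$ there is still cyclic and transitive, so $\overline{G} := G^{\mathcal{B}} \le S_{|\mathcal{B}|}$ falls under Theorem \ref{feit}; the image $\overline{N}$ of $N$ is a normal transitive subgroup of $\overline{G}$. Let $K$ be the kernel of $G \to \overline{G}$, i.e. the pointwise stabilizer of $\mathcal{B}$. The key observation for part (i) is that $\langle\tau\rangle$ cannot inject into $\overline{G}$ while $N$ surjects onto it: if $N \cap \langle\tau\rangle = 1$, then (using that $N$ is transitive, hence $|N|$ is divisible by $n$, and $\langle\tau\rangle N = G$) we would get $|G| = |N|\cdot|\langle\tau\rangle| = |N|\cdot n$, forcing $N \cap \langle\tau\rangle$ to be nontrivial unless $n = 1$; more carefully, transitivity of $\langle\tau\rangle$ means $|\langle\tau\rangle| = n$ divides $|N\langle\tau\rangle|/|N|\cdot|N\cap\langle\tau\rangle|^{-1}\cdot\ldots$ — I would instead argue that since $N$ is transitive, $G = N\langle\tau\rangle$ forces $|\langle\tau\rangle : N\cap\langle\tau\rangle| = |G:N|$, and since $n = |\langle\tau\rangle|$ divides $|N|$ (as $N$ is transitive, its order is a multiple of $n$), comparing with $|G| = |N|\,|G:N|$ shows $|N\cap\langle\tau\rangle| = n^2/|G| \cdot |N|/n \cdot \ldots$; cleanest is: $|N\cap\langle\tau\rangle| = |N|\cdot|\langle\tau\rangle|/|G| = |N|\cdot n / |G|$, and since $n \mid |N|$ and $|G:N|$ must divide... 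Actually the right statement is $|N\cap\langle\tau\rangle| = |\langle\tau\rangle|/|G:N| = n/|G:N|$, and now Lemma \ref{cyclic} (applied to the primitive quotient $\overline{G}$ with its normal subgroup $\overline{N}$) bounds $|\overline{G}:\overline{N}|$, hence bounds $|G:N|$ once one controls what happens inside $K$.

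More precisely, I would first reduce to $K = 1$, i.e.\ to the primitive case. If $K \ne 1$, then $K \le N$ (since $K$ acts trivially on blocks, and... well, $K$ need not lie in $N$ a priori) — so instead consider the quotient group $G/K \cong \overline{G}$, which contains the cyclic transitive subgroup $\overline{\langle\tau\rangle}$ and the normal transitive subgroup $\overline{N}$; by induction $\overline{G}$ has the claimed structure $(\ldots.C_{p_i}^{k_i}\ldots).\tilde{G}$ with the stated constraints on $|\mathcal{B}|$ in place of $n$. Then one analyses $K$: since $\mathcal{B}$ is a maximal block system, $K$ acts on each block, and $\langle\tau\rangle \cap K$ is cyclic of order $n/|\mathcal{B}|$ acting as a transitive (cyclic) group on each block consistently. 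The fact that $K$ contains the normal subgroup $N \cap K$ of $G$ which must be ``large enough'' forces the elementary-abelian layers $C_{p_i}^{k_i}$. Here I expect the bookkeeping to show that $K$ is a product of such layers with $\prod p_i$ equal to (block size), and that the order of any element of $G$ is at most $|\mathcal{B}| \cdot \max(\text{order in } K) \le n$, using that an element of $G$ of order $> n$ would project to an element of order $> |\mathcal{B}|$ in $\overline{G}$ (contradicting the inductive ``no element of order $> |\mathcal{B}|$'' clause) unless its $K$-part contributes, which it cannot because the layers have prime exponent compatible with the cyclic action.

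For part (ii), the hypothesis $|N\cap\langle\tau\rangle| = 2$ is the ``tight'' case: it means $n/|G:N| = 2$, i.e.\ $|G:N| = n/2$, and since $G/N$ is a quotient of $\langle\tau\rangle N / N \cong \langle\tau\rangle/(N\cap\langle\tau\rangle)$, which is cyclic of order $n/2$, we get $G/N \cong C_{n/2}$. Now the second bullet of Lemma \ref{cyclic} applies directly to the primitive quotient $\overline{G}$: either $\overline{G} = C_2$ (so $|\mathcal{B}| = 2$) or $\overline{G} = S_4$ (so $|\mathcal{B}| = 4$). In the first case the top quotient $\tilde{G}$ is $C_2$ and one needs $\prod p_i = n/2$; in the second $\tilde{G} = S_4$ and $\prod p_i = n/4$. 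The remaining work is to show the kernel $K$ is built purely from elementary-abelian prime-power layers $C_{p_i}^{k_i}$ — this follows because $K \cap \langle\tau\rangle$ is cyclic of index $2$ in $\langle\tau\rangle$ and acts compatibly transitively on the blocks, which (by an argument on regular cyclic modules, or by iterating the block argument inside $K$) forces each composition factor of $K$ visible to $\langle\tau\rangle$ to be an elementary abelian $p$-group, and a normal-subgroup-chain argument assembles them as an iterated extension $(\ldots(C_{p_1}^{k_1}.C_{p_2}^{k_2})\ldots).C_{p_m}^{k_m}$. The ``no element of order $> n$'' claim then drops out: an element of order $> n = 2\prod p_i^{?}\ldots$ (resp. $4\prod$) would have too large order either on the blocks or within a layer. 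The main obstacle I anticipate is precisely the structural control of the kernel $K$ — showing it decomposes into elementary-abelian layers indexed by the primes dividing the block size, rather than containing a larger cyclic or nonabelian piece. I would handle this by exploiting that $K$ carries a transitive cyclic action (from $\langle\tau\rangle \cap K$) on each block and that $K \trianglelefteq G$ with $G/K$ already pinned down, reducing to an analysis of $\langle\tau\rangle\cap K$-invariant normal subgroup lattices, possibly applying the whole theorem recursively to $K$ with its own block structure.
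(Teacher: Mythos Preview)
Your plan diverges from the paper in both parts, and in each case there is a real gap.

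\textbf{Part i).} The paper's argument is short and classification-free: assume $N\cap\langle\tau\rangle=\{1\}$. Then $G/N\cong C_n$, and since an $n$-cycle is self-centralising in $S_n$, $|\tau^G|=|G|/n=|N|$; as $\tau^G\subset N\tau$, in fact $\tau^G=N\tau$, so this entire coset consists of $n$-cycles. On the other hand, transitivity of $N$ forces a point stabiliser $G_1$ to contain a full transversal for $N$ in $G$, hence $G_1$ meets $N\tau$ --- impossible, as $n$-cycles have no fixed points. Your route via $|N\cap\langle\tau\rangle|=n/[G:N]$ and Lemma~\ref{cyclic} on a primitive quotient $\overline G$ only bounds $[\overline G:\overline N]=[G:NK]$, not $[G:N]$; to close the gap you must control $|K/(N\cap K)|$, which is exactly the hard analysis of part ii). So your approach to i) is not merely longer --- it presupposes the substance of ii).

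\textbf{Part ii).} You pass to a \emph{maximal} block system (primitive quotient), whereas the paper uses a \emph{minimal} one (primitive action on a block). More importantly, the step ``the second bullet of Lemma~\ref{cyclic} applies directly to $\overline G$'' is unjustified: that lemma needs $\overline G/\overline N\cong C_{|\mathcal B|/2}$, i.e.\ $[G:NK]=|\mathcal B|/2$, i.e.\ $|K/(N\cap K)|=n/|\mathcal B|$. This is precisely what has to be proved. The paper confronts this head-on: with a minimal block system, $K$ is a subdirect product of copies of a primitive group $H\le S_{n'}$ containing an $n'$-cycle, and one must show $K$ has no cyclic quotient of order $\ge n'$ unless $H=C_p$ (whence $K$ is elementary abelian and induction finishes). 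The delicate case is $C_p<H\le AGL_1(p)$: here the paper shows that a cyclic quotient of order $\ge p$ would force $K'\lneq P$ (the elementary abelian $p$-socle), then invokes Gasch\"utz's splitting theorem to produce a central element of order $p$, contradicting $Z(K)=1$. Your sketch (``regular cyclic modules'', ``iterating the block argument inside $K$'', ``$N\cap K$ transitive on blocks'') does not touch this obstruction, nor does it establish that $N\cap K$ is transitive on a block, which your induction would need. The elementary-abelian layer structure is the crux, and it does not fall out of the block bookkeeping alone.
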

\begin{proof}
i). Assume $N\cap \langle \tau \rangle = \{1\}$.
As $G/N$ is abelian (even cyclic of order $n$), we have $g\tau g^{-1}\tau^{-1}\in N$ for all $g\in G$.
In particular, $\tau^G \subset N\tau$. But also $C_G(\tau) = \langle \tau\rangle$ (an $n$-cycle is self-centralizing even in all of $S_n$), so $\tau^G$ must be of cardinality $|N|$, i.e.\ $\tau^G = N\tau$.

Denote by $G_1$ a point stabilizer in $G$.
By the transitivity of $N$, the stabilizer $N_1:=G_1\cap N$ has index $n$ in $G_1$. Let $g_1,...,g_n$ be a set of coset representatives of $N_1$ in $G_1$. As $g_iN=g_jN$ already implies $g_i^{-1}g_j\in N\cap G_1 = N_1$, the elements $g_1,...,g_n$ are also a set of coset representatives for $N$ in $G$.
In particular, $G_1$ must intersect every coset of $N$, which is impossible, as the coset of $\tau$ consists entirely of fixed point free elements, namely $n$-cycles. This is the desired contradiction.

ii). Assume now that $|N\cap \langle \tau \rangle| = 2$.\\
From the previous lemma it follows that if $G$ is primitive, then $G = C_2$ or $G=S_4$, and these groups certainly contain no element of order larger than the degree of $G$.

Now let $G$ be imprimitive, $P$ be a minimal partition of $\{1,...,n\}$ into $G$-blocks (i.e. the action of a block stabilizer on a single block is primitive) and let $K\trianglelefteq G$ be the kernel of the action of $G$ on the blocks.
Denote by $\tilde{n}$ the number of blocks in $P$ and by $n'$ the length of a block.
So $G/K$ acts transitively on $\tilde{n}$ points, and $n=n'\tilde{n}$.\\
Now $NK/K$ is a transitive normal subgroup of $G/K$, with cyclic factor group 
$\langle \tau\rangle/(\langle \tau\rangle \cap NK)$, which has order a divisor of $\tilde{n}$.\\
Now if $|K/(N \cap K)| < n'$, then $NK/N (\cong K/(N\cap K))$ would be cyclic of order less then $n'$, so $G/NK$ would be cyclic of order larger than $\frac{\tilde{n}}{2}$. But this is impossible because $G/NK = (G/K)/(NK/K)$, where $G/K$ is generated by the transitive normal subgroup $NK/K$ and the $\tilde{n}$-cycle $\langle \tau\rangle K/K$, and we have already seen in i)  
that these two subgroups must intersect at least in a subgroup of order 2.\\
\\
We are going to prove however, that $|K/(N \cap K)| > n'$ is impossible, and $|K/(N \cap K)| = n'$ only if $K$ is an elementary abelian group (of exponent $n'$).\\
\\
So consider the cyclic group $K/(N\cap K)$. By minimality of the partition $P$, the image of a block stabilizer in the action on a single block is primitive; furthermore the image of $K$ in this action is a transitive normal subgroup (as it contains a cyclic transitive subgroup), and therefore this image, let us call it $H$, is also primitive, as the list in Theorem \ref{feit} shows. So $K$ embeds into a direct product of copies of a primitive group $H$ of degree $n'$, containing an $n'$-cycle. (Also, by transitivity the image $H$ is independent of the chosen block, so $K$ is in fact a subdirect product.) 

We will discuss the different possibilities for $H$.

{\it Case 1: $H$ non-solvable.}\\
In this case we obtain (using Theorem \ref{feit} to get the isomorphism types for $H$) that $K$ is the extension of a (solvable) group of exponent at most $\frac{|H|}{|soc(H)|}$ by a direct product of non-abelian simple groups. It is clear that this group cannot have a cyclic factor larger than $\frac{|H|}{|soc(H)|}$, and by lemma \ref{cyclic} this factor is always smaller than $\deg(H)$.\\
That leaves $H=S_4$ or $H \leq AGL_1(p)$.

{\it Case 2: $C_p \leq H \leq AGL_1(p)$.}\\
Here, $K$ is the (split) extension of an abelian group $A$ of exponent at most $p-1$ by an elementary-abelian $p$-group $P$. If this group $K$ had a normal subgroup with cyclic factor group of order at least $p$, then in particular it would have one of index $p$ (as factoring out only from $A$ can never yield cyclic factors larger than $p-1$). 
But then $K'$ is a proper subgroup of $P$.\\
Now assume furthermore that $H \ne C_p$. Then $Z(K)=\{1\}$ (as even the images of the projection to a component have trivial center). Also, as a special case of a classical theorem by Gasch\"utz (\cite{Ga}), $K$ splits over the normal $p$-subgroup $K'$. Let $U$ be a complement. Then $U$ is abelian and contains an element $x$ of order $p$ (i.e. $x\in P$). But then $x\in Z(K)$, a contradiction.

So $K$ can only have cyclic factors of order smaller than $p$, unless $H=C_p$ (and in this case there are certainly no cyclic factors larger than $p$).

{\it Case 3: $H=S_4$.}\\
Here, $K$ is the extension of a subgroup of $S_3^{\tilde{n}}$ (not contained in $C_3^{\tilde{n}}$!) by an elementary abelian $2$-group $P$. First we look for cyclic factors of the subgroup $K/P$ of $S_3^{\tilde{n}}=AGL_1(3)^{\tilde{n}}$; but this subgroup has a structure just like the groups considered in the previous case, so we already know that there are no cyclic factors of order larger than 2. So if $K$ had any normal subgroup $N$ with cyclic factor group of order larger than 2, then $N$ could not contain $P$, so $U := K \cap A_4^{\tilde{n}}$ would have a normal subgroup $N\cap U$ (obviously not containing $P$ any more!) with cyclic quotient, i.e. $U' < P$. But one proves $U'=P$ just as in the $AGL_1(p)$-case. 

So we have proven, under the assumptions of ii), that $H=C_p$ (so $K$ is elementary-abelian), and the assertion now follows by induction, because if the factor group $G/K$ contains no element of order larger than its degree, then the analogous statement holds in $G$.
\end{proof}

\section{Proof of Theorem \ref{T1}}
We are now ready for the proof of the main theorem.
\begin{proof}\ \\
{\it First step: Reduction to group theory via ramification theory}\\
We first follow \cite{M} (Proof of theorems 4.9 and 4.6) in reducing the problem to a group theoretic one:\\
$H(t,X)$ is not a proper power over $\overline{\mathbb{Q}}$, therefore by elementary transformations one sees that $H(t,X) - 1$ is absolutely irreducible. Assume there are infinitely many $t_0 \in \mathbb{Z}$ such that $H(t_0,X) - 1$ becomes reducible. Then according to \cite{M}, Prop. 2.1., there is a 
rational function $g(Z)\in \mathbb{Q}(Z)$ such that the following hold:
\begin{itemize}
\item[i)] $g$ is a $\mathbb{Z}$-Siegel function, that is the set $g(\mathbb{Q}) \cap \mathbb{Z}$ is infinite.
\item[ii)] $H(t,X)-1$ is reducible over $\mathbb{Q}(z)$, where $z$ is a root of $g(Z)-t$.
\end{itemize}

Write $H(t,X) - 1 = t^n H_2(\frac{X}{t}) - 1$. Substituting $\frac{X}{t}$ by $X$ and denoting by $z$ a zero of $g(Z)-t$, we get that the polynomial $f(X):= t^n H_2(X) - 1$ is irreducible over $\overline{\mathbb{Q}}(t)$, whereas it becomes reducible over $\overline{\mathbb{Q}}(z)$. Let $x$ be a root of $f$ over $\overline{\mathbb{Q}}(t^n)$, let $\overline{\mathbb{Q}}(y)$ be a minimal intermediate field of $\overline{\mathbb{Q}}(z)|\overline{\mathbb{Q}}(t^n)$ over which $f$ is reducible, and assume without loss that $y$ is contained in the splitting field of $f$ over $\overline{\mathbb{Q}}(t^n)$.

Set $G:=Gal(f|\overline{\qq}(t^n))$. $G$ acts on the roots of $f$ as a transitive subgroup of $S_n$.
Let $\tau \in G$ be the generator of an inertia subgroup of a place extending $t^n\mapsto 0$.
Similarly, let $\sigma\in G$ be the generator of an inertia subgroup of a place extending $t^n\mapsto \infty$.\\
As $\frac{1}{H_2(x)} = t^n$, the place $t^n \mapsto 0$ is fully ramified (of ramification index $deg(H_2) = n$) in $\overline{\mathbb{Q}}(x)$, i.e. the corresponding inertia subgroup generator $\tau$ in an $n$-cycle.\\
Furthermore, $H_2$ is not a proper power, so if we denote by $n_1,...,n_r$ the multiplicity of the zeros of $H_2$, we get $\gcd(\{n_1,...,n_r\}) = 1$. The $n_i$ are of course also the cycle lengths of $\sigma$ (see e.g.\ Lemma 3.1 in \cite{M}).\\
\\
Now consider the ramification in $\overline{\mathbb{Q}}(y)|\overline{\mathbb{Q}}(t^n)$. As $g(z)^n = t^n$ and a $\mathbb{Z}$-Siegel function has either one pole or two algebraically conjugate poles (see \cite{Lg}, 8.5.1), the inertia subgroup corresponding to a place of $\overline{\mathbb{Q}}(z)$ lying over $t^n \mapsto \infty$ is generated by an element consisting of at most two cycles of equal length. The same therefore holds for the places of $\overline{\mathbb{Q}}(y)$ lying over $t^n \mapsto \infty$. Let $m$ be the cycle length in the latter field, i.e. the inertia group generator here has cycle structure $(m)$ or $(m,m)$, or in other words, $\sigma$ has cycle structure $(m)$ or $(m,m)$ in the action on $G/G_y$.\\
\\
Let $u$ be an orbit length of the stabilizer $G_x$ in its action on $G/G_y$. As a conjugate of $\sigma^{n_i}$ lies in $G_x$, and $\sigma^{n_i}$ has orbits of length $\frac{m}{(m,n_i)}$, we get that $\frac{m}{(m,n_i)}$ divides $u$, for all $i$. I.e., $m$ divides all $u \cdot (m,n_i)$, and therefore also the greatest common divisor of these terms, which is just $u$, as $\gcd(\{n_1,...,n_r\}) = 1$.
\\
So $u$ is a multiple of $m$, and as $G_x$ has to act intransitively on the conjugates of $y$ (because so does $G_y$ on the conjugates of $x$), we get that $u=m$ and $\sigma$ must have cycle structure $(m,m)$ on $G/G_y$.
That means, the two places of $\overline{\mathbb{Q}}(y)$ over $t^n \mapsto \infty$ are fully ramified in $\overline{\mathbb{Q}}(z)$, and as that is a rational function field, the Riemann-Hurwitz genus formula (cf.\  e.g.\ \cite[Theorem 3.4.13]{St}) yields that those are the only places ramified in $\overline{\mathbb{Q}}(z)|\overline{\mathbb{Q}}(y)$. In particular the places over $t^n \mapsto 0$ are unramified in $\overline{\mathbb{Q}}(z)|\overline{\mathbb{Q}}(y)$, i.e. (as $\overline{\mathbb{Q}}(t)|\overline{\mathbb{Q}}(t^n)$ lies in $\overline{\mathbb{Q}}(z)$) they have ramification index $n$ over $t^n \mapsto 0$. So $2m$ is a multiple of $n$:\\
 $2m = k\cdot n$, with some $k \in \mathbb{N}$.

By transferring the places of $\overline{\mathbb{Q}}(y)$ extending the place $t^n\mapsto\infty$ to $0$ and $\infty$, we can assume $\frac{h(y)^n}{y^m} = t^n$, with a separable polynomial $h$ of degree $k$.

{\it Second step: Application of the results of Section \ref{permgroups}}\\
We will now show, that $m$ cannot be a multiple of $n$.\\
Assume $m$ were a multiple of $n$; then we get $\frac{h(y)}{y^{m/n}} = t$, with $\deg(h) \ge 2$, so $\overline{\mathbb{Q}}(t)$ is a proper subfield of $\overline{\mathbb{Q}}(y)$.\\
In particular, $\overline{\qq}(t)$ is then contained in the splitting field of $f$ over $\overline{\qq}(t^n)$, and of course the extension $\overline{\qq}(t)|\overline{\qq}(t^n)$ is normal with cyclic Galois group of order $n$.\\
 Setting $N := Gal(f|\overline{\mathbb{Q}}(t))$ we therefore get $N\trianglelefteq G$ and $[G:N]=n$.
Furthermore the place $t^n\to 0$ is already fully ramified of ramification index $n$ in $\overline{\qq}(t)$, which means there is no further ramification above $\overline{\qq}$(t); in other words $\langle \tau \rangle\cap N = \{1\}$.

So we have shown $G = N \rtimes \langle\tau\rangle$, where $N$, in the action on $G/G_x$, is a transitive normal subgroup, and $\tau$ is an $n$-cycle in $S_n$. This is however impossible by Theorem \ref{group}i).

So $2m$ must be an {\it odd} multiple of $n$ (and in particular $n$ is even, which was already shown in \cite{M}, Theorem 4.9). Then we have $\frac{h(y)^2}{y^k} = t^2$. Just as in the above case, we get $G = N \langle\tau\rangle$, with a transitive normal subgroup $N$ ($:=Gal(f | \overline{\mathbb{Q}}(t^2))$)); this time with $|N \cap \langle\tau\rangle| = 2$.\\
\\
Theorem \ref{group}ii) shows that $G$ does not contain an element of order $>n$. But $G$ contains the element $\sigma$, which modulo $K_y := core_G(G_y)$ \footnote{By $core_G(U)$ we denote the kernel of the action of $G$ on $G/U$, or equivalently the largest normal subgroup of $G$ contained in $U$} has order $m = k \cdot \frac{n}{2}$, $k$ odd; so $k=1$. But then the inertia group generators of $\overline{\mathbb{Q}}(y)|\overline{\mathbb{Q}}(t^n)$ over $0$ and $\infty$ have cycle structure $(n)$ and $(\frac{n}{2},\frac{n}{2})$ respectively. By the Riemann-Hurwitz genus formula, there can only be one more ramified place in this extension, and it has to be simply ramified, i.e. the inertia group generator is a transposition in $S_n$.

{\it Final step: Showing $G=C_2$ or $G=D_4$}\\
So $G/K_y$ is generated by an $n$-cycle, an $(\frac{n}{2},\frac{n}{2})$-cycle and a transposition, and the product of these three elements is the identity. This readily implies that $G/K_y = C_2 \wr C_{n/2}$, which can be seen as follows:\\
By appropriate numbering, the $((\frac{n}{2},\frac{n}{2}))$-cycle is $(1,3,5,...,n-1)(2,4,6,...,n)$, and the transposition is $(1,2)$. The group generated by these two elements acts imprimitively, with the block system $\{\{1,2\}, \{3,4\},...,\{n-1,n\}\}$. Also, its image in the action on the  $\frac{n}{2}$ blocks is a cyclic group of order $\frac{n}{2}$, as the transposition acts trivially on the blocks. Therefore $G/K_y \le C_2 \wr C_{n/2}$, and the existence of a transposition in this group enforces equality. 
\\
\\
Furthermore $N K_y/K_y = C_2^{n/2}$, the block kernel of the above wreath product.

$G$ also acts transitively on the cosets of $G_xK_y$, with kernel at least $K_y$. But also $G_xK_y$ is still intransitive on $G/G_y$, so $G_y$ is intransitive on $G/(G_xK_y)$. In particular, $G_y\cdot core_G(G_xK_y)$ is intransitive on $G/G_x$, which by minimality of $\overline{\mathbb{Q}}(y)$ enforces $K_y \ge core_G(G_xK_y)$, with equality altogether.\\
Then however $G/K_y$ has a faithful transitive action on $\tilde{n}:=[G:G_xK_y]$ points (note that $\tilde{n}|n$), with $NK_y/K_y = C_2^{n/2}$ acting as a transitive normal subgroup. 

But as a transitive abelian group, this subgroup acts regularly, so $2^{n/2}=\tilde{n}\le n$. This only leaves $n \in \{2,4\}$, and $\tilde{n} = n$, i.e. $G/K_y = C_2$ or $D_4$, and $G_xK_y = G_x$, which yields $K_y = \{1\}$, as the action on $G/G_x$ is of course faithful.\\
Therefore we are left with $G=C_2$ or $G= D_4$. These examples occur indeed, as mentioned after the statement of theorem \ref{T1}.\\
This completes the proof.
\end{proof}
{\bf Remark:}\\
It is easy to write down all rational polynomials with monodromy group $C_2$ or $D_4$ (there is only one possible ramification structure in each case). The above proof then shows that the examples given in the remark after Theorem \ref{T1} are in fact the only counter-examples (up to linear transformations in the variables).

\end{document}